\documentclass[12pt,english,reqno]{amsart}
\usepackage[latin9]{inputenc}
\setcounter{tocdepth}{1}
\usepackage{verbatim}
\usepackage{textcomp}
\usepackage{amsthm}
\usepackage{amstext}
\usepackage{amssymb}
\usepackage{graphicx}
\usepackage{esint}
\date{\today}

\makeatletter
\newcounter{cprop}[section]


\usepackage{amscd}\usepackage{amsthm}\usepackage[english]{babel}
\usepackage[arrow,matrix]{xy}\usepackage{cite}\usepackage{bbm}
\@ifundefined{definecolor}
 {\usepackage{color}}{}
\usepackage{MnSymbol}

\topmargin=0.1in \textwidth5.8in \textheight8in 
\setlength{\oddsidemargin}{0.8cm}
\setlength{\evensidemargin}{0.8cm}

\theoremstyle{theorem}
\newtheorem{thm}[cprop]{Theorem}
\newtheorem{lemma}[cprop]{Lemma}

\newtheorem{cor}[cprop]{Corollary}
\theoremstyle{definition}
\newtheorem{rem}[cprop]{Remark}

\newtheorem{rk}[cprop]{Remark}
\newtheorem{defn}[cprop]{Definition}

\numberwithin{equation}{section}

\newcommand{\F}{{\mathcal F}}

\newcommand{\dd}{{\mathrm{d}}}

\newcommand{\LL}{{\mathcal L}}
\newcommand{\EE}{{\mathcal E}}
\newcommand{\E}{{\mathbb E}}

\newcommand{\w}{{\bf w}}
\newcommand{\s}{{\sigma}}

\newcommand{\bea}{\begin{eqnarray}}
\newcommand{\eea}{\end{eqnarray}}

\newcommand{\N}{\mathbb N}

\newcommand{\R}{\mathbb{R}}

\newcommand{\PP}{\mathbb{P}}
\newcommand{\FF}{\mathbb{F}}

\def\d {\triangle}
\def\t {\theta}
\def\w {\omega}
\def\> {\Rightarrow}
\def\0 {\emptyset}

\def\a {\alpha}
\def\l {\lambda}
\def\d {\delta}
\def\e {\varepsilon}

\def\b {\beta}
\def\g {\gamma}
\def\s {\sigma}




\newcommand{\diam}{\mathrm{diam}}


\makeatother

\usepackage{babel}

\begin{document}
\title[Weak synchronization for isotropic flows]{Weak synchronization for isotropic flows}

\begin{abstract}
We study Brownian flows on manifolds for which the associated Markov process is strongly mixing with respect to an invariant probability measure 
and for which the distance process for each pair of trajectories is a diffusion $r$. We provide a sufficient condition on the boundary behavior of $r$ at $0$ 
which guarantees that the statistical equilibrium of the flow is almost surely a singleton and its support is a weak point attractor. The condition is 
fulfilled in the case of negative top Lyapunov exponent, but it is also fulfilled in some cases when the top Lyapunov exponent is zero. Particular examples are 
isotropic Brownian flows on $S^{d-1}$ as well as isotropic Ornstein-Uhlenbeck flows on $\R^d$. 
\end{abstract}

\author[M. Cranston]{Michael Cranston}
\address{Department of Mathematics\\
University of California, Irvine, USA}
\email{mcransto@math.uci.edu}

\author[B. Gess]{Benjamin Gess}
\address{Max-Planck Institute for Mathematics in the Sciences \\
04103 Leipzig\\
Germany }
\email{bgess@mis.mpg.de}

\author[M. Scheutzow]{Michael Scheutzow}
\address{Institut f\"ur Mathematik, MA 7-5\\
Technische Universit\"at Berlin\\
10623 Berlin \\
Germany}
\email{ms@math.tu-berlin.de}

\keywords{synchronization, random dynamical system, random attractor, Lyapunov exponent, stochastic differential equation, statistical equilibrium, isotropic Brownian flow, 
isotropic Ornstein-Uhlenbeck flow}

\subjclass[2010]{37B25; 37G35, 37H15}

\maketitle

\section{Introduction}
 
We study the asymptotic behavior of white-noise random dynamical systems\footnote{For notation and some background on random dynamical systems and random attractors see Section \ref{sec:not} below.} (RDS) $\varphi$ on complete, seperable metric spaces $(E,d)$. 
We present necessary and sufficient conditions for weak synchronization, which means that there is a weak point attractor for $\varphi$ consisting of a single random point. In particular, in this case
    $$d(\varphi_{t}(\cdot,x),\varphi_{t}(\cdot,y)) \to 0 \text{ for }t\to\infty,$$
in probability, for all $x,y\in E$.

In \cite{FGS14} (weak) synchronization for RDS generated by SDE driven by additive noise
\begin{equation}\label{eq:SDE}
 \dd X_t = b(X_t)\,\dd t + \dd W_t
\end{equation}
has been analyzed and general necessary and sufficient conditions for (weak) synchronization have been derived, 
based on a local asymptotic stability condition. More precisely, the existence of a non-empty open set $U$ and a sequence $t_n \uparrow \infty$ such that $U$ is contracted along the flow, that is,
  $$\PP(\lim_{n\to\infty}\diam\ \varphi_{t_n}(\cdot,U)=0)>0$$
was assumed in \cite{FGS14}. This condition was shown to be satisfied in the case that the top Lyapunov exponent is negative $\l_1 < 0$ using the stable manifold theorem. In addition, in \cite{FGS14} weak synchronization has been shown for \eqref{eq:SDE} in the gradient case, i.e.\ if $b=\nabla V$ for some $V\in C^2$ among further assumptions.

These results are complemented by the present work which concentrates on the case of vanishing top Lyapunov exponent $\l_1 = 0$. In this case, local asymptotic stability as used in \cite{FGS14} is not satisfied in general. As a first main result we present necessary and sufficient conditions for weak synchronization. More precisely, in Theorem \ref{main} we show that weak synchronization holds if and only if $\varphi$ is strongly mixing and satisfies a global weak pointwise stability condition (cf.\ Definition \ref{def:ptw:stab}). In a sense, this condition replaces the local asymptotic stability condition required in \cite{FGS14}. As it turns out, this global weak pointwise stability condition is particularly easy to check in the case of the distance $r_t:=d(\varphi_t(x),\varphi_t(y))$ being a diffusion. In this case it follows from the speed measure of 
$r_t$ being infinite.

This general result is then used in order to prove weak synchronization for isotropic Brownian flows on the sphere satisfying $\l_1 \le 0$, as well as for isotropic 
Ornstein-Uhlenbeck flows with $\l_1 \le 0$. As detailed above, the cases of vanishing top Lyapunov exponent $\l_1 = 0$ could not be treated by previous methods.

\subsection{Preliminaries and notation}\label{sec:not}
We start by fixing our set-up which is the same as in \cite{FGS14}.
Let $(E,d)$ be a complete separable metric space with Borel $\sigma$-algebra $\EE$ and $\left(  \Omega,\F,\PP, \t\right)  $ be an {\em ergodic metric dynamical system}, that is, $(\Omega,\F,\PP)$ is a probability space (not necessarily complete) and  $\t:=\left(  \theta_{t}\right)  _{t\in\R}$ 
is a group of jointly measurable maps on $\left(  \Omega,\F,\PP\right)$ with ergodic invariant measure $\PP$.

Further, let $\varphi: \R_+ \times \Omega \times E \rightarrow E$ be a {\em perfect cocycle}, that is, $\varphi$ is measurable with $\varphi_{0}(  \omega,x)  =x$ and $\varphi_{t+s}\left(
\omega,x\right)  =\varphi_{t}\left(  \theta_{s}\omega,\varphi_{s}\left(
\omega,x\right)  \right)  $ for all $x\in E$, $t,s\geq0$, $\omega\in\Omega$. 
We will assume continuity of the map $x\mapsto \varphi_{s}(\w,x)$ for each $s \ge 0$ and $\omega \in \Omega$. 
The collection $(\Omega, \F,\PP,\t,\varphi)$ is then called a 
{\em random dynamical system} (in short: RDS), see \cite{A98} for a comprehensive treatment. 

Since our main applications are RDS generated by SDE driven by Brownian motion, we will assume that the RDS $\varphi$ is suitably 
adapted to a filtration and is of white noise type in the following sense: We will assume that there exists a family $\FF=(\F_{s,t})_{-\infty < s \le t < \infty}$ of sub$-\sigma$ algebras of $\F$ such that $\F_{t,u}\subseteq \F_{s,v}$ whenever $s \le t \le u \le v$, $\theta_r^{-1}(\F_{s,t})=\F_{s+r,t+r}$ for all $r,s,t$ and $\F_{t_0,t_1},\cdots,\F_{t_{n-1},t_n}$ are independent for all $t_0 \le \cdots\le t_n$. For each $t \in \R$, $\F_t$ denotes the smallest $\sigma$-algebra containing all $\F_{s,t}$, $s \le t$ and $\F_{t,\infty}$ denotes the smallest $\sigma$-algebra containing all $\F_{t,u}$, $t \le u$. Note that for each $t \in \R$, the $\sigma$-algebras 
$\F_t$ and $\F_{t,\infty}$ are independent. We will assume that $\varphi_{s}(\cdot,x)$ is $\F_{0,s}$-measurable for each $s \ge 0$. The collection 
$(\Omega, \F,\FF,\PP,\t,\varphi)$ (or just $\varphi$) is then called a {\em white noise (filtered) RDS}.

If $\varphi$ is a white noise RDS then we  define the associated Markovian (Feller) semigroup by
  $$ P_t f(x):=\E f(\varphi_t(\cdot,x)), $$
for $f$ measurable and bounded. If there exists an invariant probability measure $\rho$ for $(P_t)$, then for every sequence $t_k \to \infty$ the weak$^*$ limit, the so-called {\em statistical equilibrium} of $\varphi$,
\begin{equation}\label{eq:inv_meas}
   \mu_\omega := \lim_{k\to\infty}\varphi_{t_k}(\t_{-{t_k}}\omega)\rho 
\end{equation}
exists $\PP$-a.s.\ and does not depend on the sequence $t_k$, $\PP$-a.s.
The measure $\mu_\omega$ can be chosen to be $\F_0$-measurable and it satisfies, for each $t\ge 0$, $\mu_\omega \varphi_t^{-1}(\omega,.)=\mu_{\theta_t\omega}$  $\PP$-a.s.\ and 
$\E\mu_\omega=\rho$. 

A Markovian semigroup $(P_t)$ with invariant measure $\rho$ is said to be \textit{strongly mixing} if 
 $$P_t f(x) \to \int_E f(y) \dd \rho(y)\quad \text{for } t\to\infty$$ 
for each continuous, bounded $f$ and all $x\in E$. Similarly, an RDS $\varphi$ is said to be strongly mixing if the law of $\varphi_t(\cdot,x)$ converges to $\rho$ for 
$t\to\infty$ for all $x\in E$.

\begin{defn}
Let $(\Omega, \F,\PP,\t,\varphi)$ be a white-noise RDS. We say that {\em weak synchronization} occurs if there exists an $\F_0$-measurable random variable $a$ such that
\begin{enumerate}
\item for all $t \ge 0$, $\varphi_t(a(\omega))=a(\theta_t\omega)$ almost surely, and 
\item for every $x \in E$, we have
$$
\lim_{t \to \infty} d(\varphi_t(\theta_{-t}\omega,x),a(\omega))=0, \mbox{ in probability}.
$$
\end{enumerate}
We say that {\em synchronization} occurs if (1) holds and (2) is replaced by
\begin{itemize}
 \item[(2')] for every compact set $B \subset E$ we have 
$$
\lim_{t \to \infty} \sup_{x \in B}d(\varphi_t(\theta_{-t}\omega,x),a(\omega))=0, \mbox{ in probability}.
$$
\end{itemize}
\end{defn}

Note that (weak) synchronization means that there exists a weak (point) attractor which is a singleton (see \cite{FGS14} for further details).

\section{Main result}

\subsection{RDS on metric spaces}
Let $(E,d)$ be a complete, separable metric space and assume that $\varphi$ is an $E$-valued white noise random dynamical system for which the associated 
Markov process has an ergodic invariant probability measure $\rho$.

\begin{defn}\label{def:ptw:stab} The RDS $\varphi$ is said to satisfy
 \begin{enumerate}
  \item weak local pointwise stability iff there is a set $U\subseteq E$ with $\rho(U)>0$ and a sequence $t_n \uparrow \infty$ such that for all $x,y\in U$ there is a 
  $\d=\d(x,y)>0$ such that for all $\eta>0$:
  \begin{equation}\label{eq:ptw_stab}
     \liminf_{n\to\infty}\PP\big(d(\varphi_{t_n}(\cdot,x),\varphi_{t_n}(\cdot,y))\le \eta\big) \ge \d(x,y) >0.
  \end{equation}
  \item weak global pointwise stability iff $U$ in (1) can be chosen such that $\rho(U)=1$.
\end{enumerate}
\end{defn}

We first present a slight generalization of \cite[Lemma 2.19 (ii)]{FGS14} (which in turn is based on the main result in \cite{LJ87}).
\begin{lemma}\label{lem:structure_stat_eq}
Assume that $\varphi$ satisfies weak local pointwise stability. Then the statistical equilibrium is discrete, that is  $\mu_\omega$ consists of finitely many atoms of the same mass $\mathbb{P}$-a.s., i.e.\ there is an $N\in \N$ and $\mathcal{F}_0$-measurable random variables $a_1,\dots,a_N$ such that
         $$ \mu_\omega= \Big\{\frac{1}{N}\d_{a_i(\omega)}:i=1,\dots,N\Big\}.$$
\end{lemma}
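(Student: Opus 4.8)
The plan is to reduce everything to the behaviour of a few scalar functionals of the random measure $\mu_\omega$ under the cocycle, to run the ergodicity argument of \cite{LJ87} on each of them, and to use the pointwise stability hypothesis at exactly one place: to force $\mu_\omega$ to carry atoms. Throughout I would use the three properties of the statistical equilibrium recorded after \eqref{eq:inv_meas}: sequence-independence of the limit, $\F_0$-measurability, the equivariance $\mu_\omega\varphi_t^{-1}(\omega,\cdot)=\mu_{\theta_t\omega}$, and $\E\mu_\omega=\rho$.

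First I would introduce $S(\omega):=(\mu_\omega\otimes\mu_\omega)(\Delta)=\sum_a\mu_\omega(\{a\})^2$, where $\Delta\subseteq E\times E$ is the diagonal and the sum runs over the atoms $a$ of $\mu_\omega$, so that $S(\omega)>0$ precisely when $\mu_\omega$ has an atom. To see $\E S>0$ I would use that, by sequence-independence in \eqref{eq:inv_meas}, $\mu_\omega\otimes\mu_\omega$ is the weak$^*$ limit along the stability sequence $t_n$ of the push-forwards $\big(\varphi_{t_n}(\theta_{-t_n}\omega)\times\varphi_{t_n}(\theta_{-t_n}\omega)\big)(\rho\otimes\rho)$. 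Since $\{d\le\eta\}$ is closed, weak$^*$ convergence bounds $(\mu_\omega\otimes\mu_\omega)(\{d\le\eta\})$ below by the $\limsup$ of the corresponding push-forward masses; taking expectations, using that $\theta_{-t_n}$ preserves $\PP$, and applying Fatou's lemma together with the bound \eqref{eq:ptw_stab} on $U\times U$, I obtain
\[
\E\big[(\mu_\omega\otimes\mu_\omega)(\{d\le\eta\})\big]\ \ge\ \int_U\!\int_U\delta(x,y)\,\rho(\dd x)\,\rho(\dd y)\ >\ 0
\]
uniformly in $\eta>0$; letting $\eta\downarrow0$ and using dominated convergence gives $\E S>0$.

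Next the cocycle property turns several such functionals into sub-invariants. Because $x\mapsto\varphi_t(\omega,x)$ is merely continuous, distinct points can be identified but never separated, so $S(\theta_t\omega)\ge S(\omega)$ for each fixed $t\ge0$, $\PP$-a.s.; as $\theta_t$ preserves $\PP$ this forces $S\circ\theta_t=S$ a.s., and ergodicity makes $S$ a.s. constant. Since $S\le M:=\sup_x\mu_\omega(\{x\})$ and $\E S>0$, the set $\{M>0\}$ has positive probability; being sub-invariant it is invariant mod $\PP$-null, hence of full measure, so $\mu_\omega$ has atoms a.s. The same monotonicity applied to $M$ (the image of a maximal atom is an atom of mass $\ge m$, hence exactly $m=\sup$) shows $M\equiv m>0$ is constant, and applied to $g(\omega):=\sum_{a:\,\mu_\omega(\{a\})=m}\mu_\omega(\{a\})=k(\omega)\,m$, the mass carried by the at most $1/m$ maximal atoms, shows $g$ is constant, whence $k$ is constant. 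Constancy of $g$ forces the images of the maximal atoms of $\mu_\omega$ to be exactly those of $\mu_{\theta_t\omega}$; thus the maximal part $\mu^{\max}_\omega:=m\sum_{i=1}^{k}\delta_{a_i(\omega)}$ satisfies $\varphi_t(\omega)_*\mu^{\max}_\omega=\mu^{\max}_{\theta_t\omega}$, and the residual $\mu^{\mathrm{res}}_\omega:=\mu_\omega-\mu^{\max}_\omega$ is equivariant as well; both are $\F_0$-measurable since $m$ is constant.

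The main obstacle is to rule out residual mass, i.e. to prove $km=1$. Here I would exploit that equivariance plus $\F_0$-measurability turns expectations into invariant measures: since each $a_i$ is $\F_0$-measurable while $\varphi_t(\cdot,\cdot)$ is $\F_{0,t}$-driven and independent of $\F_0$, a direct computation shows $\rho_1:=\E\mu^{\max}_\omega$ and $\rho-\rho_1=\E\mu^{\mathrm{res}}_\omega$ are both $(P_t)$-invariant, of total masses $km$ and $1-km$. If $km<1$, writing $\rho$ as the convex combination of the invariant probability measures $\rho_1/(km)$ and $(\rho-\rho_1)/(1-km)$ and invoking extremality of the ergodic $\rho$ forces $\E\big[\mu^{\mathrm{res}}_\omega/(1-km)\big]=\rho$. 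Then $\mu^{\mathrm{res}}_\omega/(1-km)$ is an $\F_0$-measurable equivariant random probability measure with mean $\rho$, so by the one-to-one correspondence between invariant measures and statistical equilibria (\cite{LJ87}, as used in \cite{FGS14}) it coincides with $\mu_\omega$; this yields $\mu^{\max}_\omega=km\,\mu_\omega$, which is impossible, since the left side is supported on the maximal atoms only while the right side charges all of $\supp\mu_\omega$, unless $\mu^{\mathrm{res}}_\omega=0$. Hence $km=1$, so $\mu_\omega$ has exactly $N:=1/m$ atoms, each of mass $1/N$, and a measurable selection of the finitely many $\F_0$-measurable maximal atoms produces $a_1,\dots,a_N$. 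The delicate points are the passage from the ``each fixed $t$, a.s.'' statements to genuine equivariance, and the identification step, where the uniqueness of the statistical equilibrium from the structure theory of \cite{LJ87} is essential.
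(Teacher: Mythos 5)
Your argument is correct in substance, but it takes a genuinely different route from the paper's. The paper outsources the structure theory: it cites \cite[Lemma 2.19 (i)]{FGS14} (itself based on \cite{LJ87}) for the dichotomy that $\mu_\omega$ is a.s.\ either discrete with $N$ atoms of equal mass or diffuse, so it only needs to produce \emph{one} atom with positive probability; this it does by Le Jan's trick of choosing a function $\psi$ on $(E\times E)\setminus\Delta$ with $\psi(x,y)\to\infty$ as $d(x,y)\to 0$ that is integrable against $\E(\mu_\omega\otimes\mu_\omega)$, and then showing, via \eqref{eq:ptw_stab} and the independence of $\F_0$ and $\F_{0,\infty}$, that a diffuse equilibrium would force this integral to exceed $R\,\E\int 1_U 1_U\,\delta\,\dd\mu_\omega\,\dd\mu_\omega>0$ for every $R>0$. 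You instead prove both halves from scratch. Your portmanteau/Fatou bound $\E\big[(\mu_\omega\otimes\mu_\omega)(\{d\le\eta\})\big]\ge\int_U\int_U\delta\,\dd\rho\,\dd\rho$, followed by $\eta\downarrow 0$, is a clean and arguably more direct substitute for the $\psi$-construction (it correctly uses only $\rho(U)>0$, and it sidesteps the explicit $\F_0$/$\F_{0,\infty}$ factorization, which in your proof resurfaces only in the invariance of $\E\mu^{\max}_\omega$ and in the identification step); and your bootstrap through the sub-invariant functionals $S$, $M$, $g$, extremality of the ergodic $\rho$, and the one-to-one correspondence between $(P_t)$-invariant measures and $\F_0$-measurable equivariant random measures replaces the cited dichotomy with a self-contained derivation. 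That correspondence is the one nontrivial external input you invoke; it deserves at least the short martingale proof ($\E[\nu_\omega(f)\mid\F_{-t,0}]$ equals $\big(\varphi_t(\theta_{-t}\omega)\rho\big)(f)$ by independence and converges both to $\nu_\omega(f)$ and to $\mu_\omega(f)$), but it lies in the same circle of ideas from \cite{LJ87} that underpins the paper's citation, so the logical dependencies are comparable. Three compressed points you should spell out: in the step $k(\theta_t\omega)\ge k(\omega)$, two maximal atoms cannot merge (their image would carry mass $\ge 2m>M(\theta_t\omega)$) and no outside mass can pile onto the image of a maximal atom, which is exactly what makes $\mu^{\max}$ equivariant; the passage from a.s.\ invariance at each fixed $t$ to ergodic constancy needs the usual perfection argument (which you flag); and the final contradiction is cleanest via atom masses rather than supports, since $\mu^{\mathrm{res}}_\omega=(1-km)\mu_\omega$ would assign each maximal atom of $\mu_\omega$ the mass $(1-km)m>0$, whereas $\mu^{\mathrm{res}}_\omega$ charges those atoms zero by construction, forcing $km=1$.
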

\begin{proof}
   Due to \cite[Lemma 2.19 (i)]{FGS14}, the statistical equilibrium $\mu_\omega$ is either discrete or diffuse, which means that  $\mu_\omega$ does not have point masses $\PP$-a.s.. %
 
   Hence, we only have to show that $\mu_\omega$ has a point mass with positive probability. Denote the diagonal in $E\times E$ by 
   $\Delta$. Then there exists a measurable 
   function $\psi:(E\times E)\setminus\Delta \to [0,\infty)$ such that $\psi(x,y)\to\infty$ for $d(x,y)\to 0$ and 
       $$ \E\int_{(E\times E)\setminus\Delta} \psi(x,y)\dd\mu_\omega(x)\dd\mu_\omega(y)<\infty.$$
   By invariance of $\mu_\omega$ we get
     \begin{equation}\begin{split}\label{eq:LJ}
       &\E\int_{(E\times E)\setminus\Delta} \psi(x,y)\dd\mu_\omega(x)\dd\mu_\omega(y)\\
       &\ge \E\int_{(E\times E)\setminus\Delta}\psi(\varphi_t(\omega,x),\varphi_t(\omega,y))\dd\mu_\omega(x)\dd\mu_\omega(y)\\
       &\ge \E\int_{(E\times E)\setminus\Delta}1_{U}(x)1_{U}(y)\psi(\varphi_t(\omega,x),\varphi_t(\omega,y))\dd\mu_\omega(x)\dd\mu_\omega(y).
     \end{split}\end{equation}
     By assumption there is a sequence $t_n \to \infty$ such that, for all $x,y\in U$, $\eta>0$
       \begin{equation}\label{delta} 
        \liminf_{n\to\infty}\PP(d(\varphi_{t_n}(\cdot,x),\varphi_{t_n}(\cdot,y))\le \eta) \ge \d(x,y)>0. 
      \end{equation}
      We can and will assume that $\delta$ depends measurably upon $(x,y)$ (e.g. by 
     defining $\delta(x,y)$ as the $\lim_{\eta \to 0}$ of the left hand side of \eqref{delta}). 
     We define $C(n,x,y,R):=\{\omega\in \Omega: \psi(\varphi_{t_n}(\omega,x),\varphi_{t_n}(\omega,y))\ge R\}$ and observe that
          $$\liminf_{n\to\infty} \PP(C(n,x,y,R)) \ge \d(x,y) > 0, $$
     {\color{black}for all  $x,y\in U$ and all $R>0$.} 
From \eqref{eq:LJ} we obtain that
        \begin{equation*}\begin{split}
           &\E\int_{(E\times E)\setminus\Delta} \psi(x,y)\dd\mu_\omega(x)\dd\mu_\omega(y)\\
           &\ge R \E\int_{(E\times E)\setminus\Delta}1_{U}(x)1_{U}(y)1_{C(n,x,y,R)}(\omega)\dd\mu_\omega(x)\dd\mu_\omega(y).
        \end{split}\end{equation*}
        Using that $\mu_\omega$ is $\F_{0}$-measurable, $C(n,x,y,R)$ is $\F_{0,\infty}$-measurable and that $\F_{0}$, $\F_{0,\infty}$ are independent, we conclude that
        \begin{equation*}\begin{split}
           &\E\int_{(E\times E)\setminus\Delta}1_{U}(x)1_{U}(y)1_{C(n,x,y,R)}(\omega)\dd \mu_\omega(x)\dd \mu_\omega(y)\\
           &=\E\E\left[\int_{(E\times E)\setminus\Delta}1_{U}(x)1_{U}(y)1_{C(n,x,y,R)}(\omega)\dd \mu_\omega(x)\dd \mu_\omega(y)\Big|\F_0\right]\\
           &=\E\tilde\E\int_{(E\times E)\setminus\Delta}1_{U}(x)1_{U}(y)1_{C(n,x,y,R)}(\tilde\omega)\dd \mu_\omega(x)\dd \mu_\omega(y)\\
           &=\E\int_{(E\times E)\setminus\Delta}1_{U}(x)1_{U}(y)\PP[C(n,x,y,R)]\dd \mu_\omega(x)\dd \mu_\omega(y).
        \end{split}\end{equation*}
        Using this above, taking $\liminf_{n\to\infty}$ and using Fatou's lemma yields
        \begin{equation*}\begin{split}
           &\E\int_{(E\times E)\setminus\Delta} \psi(x,y)\dd\mu_\omega(x)\dd\mu_\omega(y)\\
           &\ge R \E\int_{(E\times E)\setminus\Delta}1_{U}(x)1_{U}(y)\liminf_{n\to\infty}\PP[C(n,x,y,R)]\dd\mu_\omega(x)\dd\mu_\omega(y)\\
           &\ge R \E\int_{(E\times E)\setminus\Delta}1_{U}(x)1_{U}(y)\d(x,y)\dd \mu_\omega(x)\dd\mu_\omega(y).
        \end{split}\end{equation*}
        If $\mu_\omega$ has no point masses, then $(\mu_\omega\otimes \mu_\omega)(\Delta)=0$ and thus	
        \begin{align*}
          \E\int_{(E\times E)\setminus\Delta}1_{U}(x)1_{U}(y)\d(x,y)\dd\mu_\omega(y)\dd\mu_\omega(x)
          &=\E\int_{E\times E}1_{U}(x)1_{U}(y)\d(x,y)\dd\mu_\omega(y)\dd\mu_\omega(x).
        \end{align*}
        Assume that the right hand side is $0$. Then, for a.a.\ $\omega\in \Omega$ and ${\mu_\omega}_{|U}$-a.a.\ $x\in E$ we have
          $$ \int_E 1_{U}(y)\d(x,y)\dd\mu_\omega(y) = 0, $$
        which implies $\mu_\omega(U)=0$. Since $\rho(U)=\E\mu_\omega(U)>0$ this implies $\rho(U)=0$, a contradiction. Thus,
        $$\E\int_{E\times E}1_{U}(x)1_{U}(y)\d(x,y)\dd\mu_\omega(y)\dd\mu_\omega(x)>0.$$
        Since $R>0$ is arbitrary we obtain a contradiction. This concludes the proof.
\end{proof}

\begin{lemma}\label{lem:very_weak_sync}
Assume that $\varphi$ satisfies weak global pointwise stability. Then the statistical equilibrium is supported by a single point.
\end{lemma}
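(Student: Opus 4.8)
The plan is to combine Lemma~\ref{lem:structure_stat_eq} with a permutation (rigidity) argument for the atoms. Since weak global pointwise stability is the special case $\rho(U)=1$ of weak local pointwise stability, Lemma~\ref{lem:structure_stat_eq} applies and produces an $N\in\N$ together with $\F_0$-measurable random variables $a_1,\dots,a_N$ with $\mu_\omega=\frac1N\sum_{i=1}^N\delta_{a_i(\omega)}$, the $a_i(\omega)$ being pairwise distinct $\PP$-a.s. It therefore suffices to rule out $N\ge2$. As a preliminary observation I would note that $\E\mu_\omega(U)=\rho(U)=1$ together with $\mu_\omega(U)\le1$ forces $\mu_\omega(U)=1$ $\PP$-a.s., so that all atoms $a_i(\omega)$ lie in $U$ almost surely. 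This is the only place where the global (rather than merely local) hypothesis enters, and it is what lets me invoke \eqref{eq:ptw_stab} at the (random) atoms.

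Assume now $N\ge2$. The first key step is a rigidity property of the flow on the atoms. From the invariance relation $\mu_\omega\varphi_t^{-1}(\omega,\cdot)=\mu_{\theta_t\omega}$ and the fact that, by Lemma~\ref{lem:structure_stat_eq}, both $\mu_\omega$ and $\mu_{\theta_t\omega}$ consist of exactly $N$ atoms of equal mass $1/N$, the images $\varphi_t(\omega,a_i(\omega))$, $i=1,\dots,N$, must be pairwise distinct (an overlap would create an atom of mass $\ge2/N$). Hence the map $\varphi_t(\omega,\cdot)$ carries the atoms of $\mu_\omega$ bijectively onto those of $\mu_{\theta_t\omega}$, and in particular, writing $m(\omega):=\min_{i\ne j}d(a_i(\omega),a_j(\omega))>0$, we obtain for every $i\ne j$ and every $t\ge0$
\begin{equation*}
 d\big(\varphi_t(\omega,a_i(\omega)),\varphi_t(\omega,a_j(\omega))\big)\ge m(\theta_t\omega).
\end{equation*}
By $\theta$-invariance of $\PP$, $m(\theta_t\omega)$ has the same law as $m(\omega)$ for each $t$, and $m>0$ $\PP$-a.s.

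The contradiction is then produced by sandwiching the quantity
\begin{equation*}
 I_n(\eta):=\E\int_{(E\times E)\setminus\Delta}1_U(x)1_U(y)\,1_{\{d(\varphi_{t_n}(\omega,x),\varphi_{t_n}(\omega,y))\le\eta\}}\,\dd\mu_\omega(x)\dd\mu_\omega(y).
\end{equation*}
For the lower bound I would repeat the conditioning step from the proof of Lemma~\ref{lem:structure_stat_eq}: since $\mu_\omega$ is $\F_0$-measurable and, for fixed $x,y$, the event in the indicator is $\F_{0,\infty}$-measurable, independence of $\F_0$ and $\F_{0,\infty}$ replaces the indicator by $\PP(d(\varphi_{t_n}(\cdot,x),\varphi_{t_n}(\cdot,y))\le\eta)$; taking $\liminf_n$, applying Fatou together with \eqref{eq:ptw_stab}, and using $1_U=1$ $\mu_\omega$-a.e.\ gives
\begin{equation*}
 \liminf_{n\to\infty}I_n(\eta)\ge\E\int_{(E\times E)\setminus\Delta}\delta(x,y)\,\dd\mu_\omega(x)\dd\mu_\omega(y)=:c,
\end{equation*}
a constant independent of $\eta$ with $c>0$, since for $N\ge2$ the off-diagonal carries $\mu_\omega\otimes\mu_\omega$-mass $(N-1)/N$ and $\delta(a_i,a_j)>0$. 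For the upper bound I substitute the atomic form of $\mu_\omega$ and invoke the rigidity estimate: each off-diagonal term vanishes unless $m(\theta_{t_n}\omega)\le\eta$, whence $I_n(\eta)\le\frac{N-1}{N}\PP(m(\theta_{t_n}\omega)\le\eta)=\frac{N-1}{N}\PP(m(\omega)\le\eta)$. Combining the two bounds yields $c\le\frac{N-1}{N}\PP(m\le\eta)$ for all $\eta>0$; letting $\eta\downarrow0$ and using $m>0$ $\PP$-a.s.\ forces $c\le0$, the desired contradiction, so $N=1$.

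I expect the main obstacle to be the rigidity step: justifying rigorously that the continuous maps $\varphi_t(\omega,\cdot)$ keep the $N$ equal-mass atoms distinct, and that $m$ is a bona fide random variable (measurable, $\PP$-a.s.\ positive, and $\theta$-stationary in distribution). Everything else is the same independence-plus-Fatou bookkeeping already carried out for Lemma~\ref{lem:structure_stat_eq}, and the global hypothesis $\rho(U)=1$ is used exactly once, to ensure that \eqref{eq:ptw_stab} is available at the atoms.
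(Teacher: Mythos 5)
Your proof is correct and takes essentially the same route as the paper's: your lower bound on $I_n(\eta)$ is exactly the paper's Step 1 (independence of $\F_0$ and $\F_{0,\infty}$ plus Fatou, applied at the atoms, which lie in $U$ because $\rho(U)=1$), and your rigidity-plus-stationarity argument for $m$ reproduces the paper's Step 2 with $F=m$, ending in the same contradiction $\PP(m\le\eta)\ge \mathrm{const}>0$ for all $\eta>0$ against $m>0$ a.s. The only differences are cosmetic --- you aggregate over all atom pairs via the integral rather than fixing the pair $(a_1,a_2)$, and you spell out the bijectivity of $\varphi_t(\omega,\cdot)$ on the equal-mass atoms, a detail the paper leaves implicit in writing $\varphi_t(\omega,A(\omega))=A(\theta_t\omega)$.
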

\begin{proof}
  By Lemma \ref{lem:structure_stat_eq} the statistical equilibrium is discrete, with
  \begin{equation}\label{eq:discr_stat_eq}
    \mu_\omega= \Big\{\frac{1}{N}\d_{a_i(\omega)}:i=1,\dots,N\Big\}.
  \end{equation}
  Let
    $$A(\w):=\{a_i(\omega):\ i=1,\dots,N\}.$$
  \textit{Step 1:} We show that for all $i,j\in \{1,\dots,N\}$ there is a $\d_{i,j}>0$ such that for every $\eta>0$, we have
    \begin{equation}\label{eqn:pos_prob} 
  	\liminf_{n\to\infty}\PP(d(\varphi_{t_n}(\cdot,a_i(\cdot)),\varphi_{t_n}(\cdot,a_j(\cdot))\le \eta)\ge \d_{i,j}.
    \end{equation}
   By assumption, there is a set $U\subseteq E$ with $\rho(U)=1$ such that \eqref{eq:ptw_stab} holds for all $x,y\in U$. Due to \eqref{eq:discr_stat_eq} we have $a_i(\omega) \in U$ for $\PP$-a.a.\ $\omega\in\Omega$ and all $i=1,\dots,N$.
   
   Since $\varphi$ is a white noise RDS we have that
   \begin{align*}
     \PP(d(\varphi_{t_n}(\cdot,a_i(\cdot)),\varphi_{t_n}(\cdot,a_j(\cdot)))\le \eta)
     &= \E(1_{d(\varphi_{t_n}(\cdot,a_i(\cdot)),\varphi_{t_n}(\cdot,a_j(\cdot)))\le \eta})\\
     &= \E\E(1_{d(\varphi_{t_n}(\cdot,a_i(\cdot)),\varphi_{t_n}(\cdot,a_j(\cdot)))\le \eta}|\F_0) \\
     &= \E\tilde\E(1_{d(\varphi_{t_n}(\omega,a_i(\tilde\omega)),\varphi_{t_n}(\w,a_j(\tilde\w)))\le \eta})\\
     &= \tilde\E\PP(d(\varphi_{t_n}(\omega,a_i(\tilde\omega)),\varphi_{t_n}(\w,a_j(\tilde\w)))\le \eta).
    \end{align*}
    Thus,
   \begin{align*}
     &\liminf_{n\to\infty}\PP(d(\varphi_{t_n}(\cdot,a_i(\cdot)),\varphi_{t_n}(\cdot,a_j(\cdot))\le \eta)\\
     &= \liminf_{n\to\infty}\tilde\E\PP(d(\varphi_{t_n}(\omega,a_i(\tilde\omega)),\varphi_{t_n}(\w,a_j(\tilde\w))\le \eta)\\
     &\ge \tilde\E\liminf_{n\to\infty}\PP(d(\varphi_{t_n}(\omega,a_i(\tilde\omega)),\varphi_{t_n}(\w,a_j(\tilde\w))\le \eta)\\
     &\ge \tilde\E\d(a_i(\tilde\omega),a_j(\tilde\omega))\\
     &=:\d_{i,j}>0.
    \end{align*}
    
	\textit{Step 2:} Assume that $A(\omega)$ is not a singleton $\PP$-a.s.. Then
	\begin{equation}\label{eq:F_pos}
	  F(\omega):=\min_{i,j=1,\dots,N,\ i\ne j}d(a_i(\omega),a_j(\omega)) >0,\quad \PP\text{-a.s.}
	\end{equation}
	Moreover, since $\varphi_t(\omega,A(\omega))=A(\t_t\omega)$ we get
	\begin{align*}
	    F(\t_t\omega)
	    &=\min_{i,j=1,\dots,N,\ i\ne j}d(a_i(\t_t\omega),a_j(\t_t\omega))\\
	    &=\min_{i,j=1,\dots,N,\ i\ne j}d(\varphi_t(\omega,a_i(\omega)),\varphi_t(\omega,a_j(\omega)))\\
	    &\le d(\varphi_t(\omega,a_1(\omega)),\varphi_t(\omega,a_2(\omega))).
	\end{align*}
    Hence, for all $\eta>0$
	\begin{align*}
	    \PP(F(\cdot)\le \eta)
	    &= \PP(F(\t_{t_n}\cdot)\le \eta) \\
	    & \ge \PP\big(d(\varphi_{t_n}(\cdot,a_1(\cdot)),\varphi_{ t_n}(\cdot,a_2(\cdot)))\le \eta\big).
	\end{align*}
	Taking $\liminf_{n\to\infty}$ and using \eqref{eqn:pos_prob} we conclude that
	\begin{align}\label{eqn:stopped_pos_prob}
	            \PP(F(\cdot)\le \eta) \ge \d_{1,2} > 0,
	\end{align}
	for all $\eta>0$ in contradiction to \eqref{eq:F_pos}.
\end{proof}

\begin{thm}\label{main} 
Let $(\varphi,\theta)$ be a white noise $E$-valued RDS which is strongly mixing with respect to an invariant probability measure $\rho$ and satisfies weak global 
pointwise stability. Then weak synchronization holds for $\varphi$.
\end{thm}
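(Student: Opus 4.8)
The plan is to derive both defining properties of weak synchronization from the fact that, under weak global pointwise stability, the statistical equilibrium collapses to a single atom. First I would invoke Lemma \ref{lem:very_weak_sync} to write $\mu_\omega=\delta_{a(\omega)}$ for an $\F_0$-measurable random variable $a$; since $\E\mu_\omega=\rho$, the law of $a$ is $\rho$. Property (1) is then immediate: the invariance relation $\mu_\omega\varphi_t^{-1}(\omega,\cdot)=\mu_{\theta_t\omega}$ reads $\delta_{\varphi_t(\omega,a(\omega))}=\delta_{a(\theta_t\omega)}$, i.e.\ $\varphi_t(\omega,a(\omega))=a(\theta_t\omega)$ $\PP$-a.s.

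For property (2) I would first pass from the pullback to the forward flow. Using (1) in the form $a(\omega)=\varphi_t(\theta_{-t}\omega,a(\theta_{-t}\omega))$ and the fact that $\theta_{-t}$ preserves $\PP$, the random variable $d(\varphi_t(\theta_{-t}\omega,x),a(\omega))$ has the same law as $d(\varphi_t(\omega,x),\varphi_t(\omega,a(\omega)))$. Since $a$ is $\F_0$-measurable while the forward flow is $\F_{0,\infty}$-measurable and these $\sigma$-algebras are independent, conditioning on $\F_0$ gives, for every $\eta>0$,
\[
\PP\big(d(\varphi_t(\theta_{-t}\omega,x),a(\omega))>\eta\big)=\int_E\PP\big(d(\varphi_t(\cdot,x),\varphi_t(\cdot,y))>\eta\big)\,\dd\rho(y)=:G_t(x),
\]
so property (2) is equivalent to $G_t(x)\to0$ for every $x$ and every $\eta>0$.

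Next I would obtain the $\rho$-averaged version of this for free from the single-atom equilibrium. Weak-$*$ convergence $\varphi_t(\theta_{-t}\omega)\rho\to\delta_{a(\omega)}$ forces the $\rho$-mass outside any ball around $a(\omega)$ to vanish, i.e.\ $\rho(\{z:d(\varphi_t(\theta_{-t}\omega,z),a(\omega))>\eta\})\to0$ $\PP$-a.s.; taking expectations (bounded convergence) and applying Fubini together with the identity above yields $\int_E G_t(z)\,\dd\rho(z)\to0$. Equivalently, two $\rho$-distributed points carried by the common noise synchronize in probability.

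The final and hardest step is to upgrade this $\rho$-average to \emph{every} fixed starting point $x$, and this is where strong mixing must enter. Writing $G_t(x)=\E\big[\int_E h_{t-s}(\varphi_s(\cdot,x),\varphi_s(\cdot,y))\,\dd\rho(y)\big]$ with $h_r(u,v):=\PP(d(\varphi_r(\cdot,u),\varphi_r(\cdot,v))>\eta)$ via the Markov property, strong mixing makes the law of the first coordinate $\varphi_s(\cdot,x)$ converge to $\rho$ as the burn-in time $s\to\infty$, and one wants to combine this with $\int_E\int_E h_{t-s}\,\dd\rho\,\dd\rho\to0$. The obstacle is that the two coordinates $\varphi_s(\cdot,x)$ and $\varphi_s(\cdot,y)$ are driven by the same noise and hence correlated, so the joint law at the reset time cannot simply be replaced by $\rho\otimes\rho$; this correlation is exactly the synchronization mechanism and must be retained. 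I would therefore make the step rigorous by a weak-convergence argument: show the joint laws of $(\varphi_t(\cdot,x),\varphi_t(\cdot,a))$ are tight with both marginals tending to $\rho$, and identify every subsequential weak-$*$ limit with the diagonal measure $\int_E\delta_{(y,y)}\,\dd\rho(y)$ using the single-atom equilibrium together with strong mixing; the portmanteau theorem then gives $G_t(x)\to0$. Identifying the limit on the diagonal---equivalently, ruling out non-diagonal couplings of $\rho$ that are stationary for the two-point motion---is the crux of the argument.
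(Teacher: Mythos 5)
Your first two steps are sound and in fact reproduce the paper's skeleton: Lemma \ref{lem:very_weak_sync} gives $\mu_\omega=\delta_{a(\omega)}$ with $a$ an $\F_0$-measurable random variable of law $\rho$; property (1) follows from the invariance relation $\mu_\omega\varphi_t^{-1}(\omega,\cdot)=\mu_{\theta_t\omega}$; and your reduction of property (2), via $\theta$-invariance of $\PP$ and independence of $\F_0$ and $\F_{0,\infty}$, to showing $G_t(x)=\int_E\PP\big(d(\varphi_t(\cdot,x),\varphi_t(\cdot,y))>\eta\big)\,\dd\rho(y)\to 0$ is correct (it mirrors Step 1 of the proof of Lemma \ref{lem:very_weak_sync}), as is your derivation of the $\rho$-averaged statement from the pullback convergence $\varphi_t(\theta_{-t}\omega)\rho\to\delta_{a(\omega)}$. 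However, the decisive step --- upgrading from the $\rho$-average to every fixed initial point $x$ using strong mixing --- is precisely the content of \cite[Proposition 2.20]{FGS14}, which is the only thing the paper invokes after Lemma \ref{lem:very_weak_sync}. Your proposal does not prove this implication: you state yourself that identifying the subsequential limits with the diagonal measure ``is the crux of the argument'' and leave it open. As written, the proposal is therefore an accurate \emph{reduction} to the known proposition, not a proof of the theorem.

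Beyond being incomplete, the route you sketch for the crux has a concrete defect: a subsequential weak-$*$ limit $\nu$ of the joint laws of $\big(\varphi_t(\cdot,x),\varphi_t(\cdot,a(\cdot))\big)$ is a coupling of $(\rho,\rho)$, but it need not be \emph{stationary} for the two-point semigroup, so ``ruling out non-diagonal couplings of $\rho$ that are stationary for the two-point motion'' does not apply to it. To manufacture stationarity one must pass to Ces\`aro averages (Krylov--Bogolyubov), and invariant couplings can then indeed be identified as diagonal --- for instance by applying the statistical-equilibrium construction to the two-point RDS $\varphi\times\varphi$ started in an invariant coupling $\nu$: the pullback limit $\nu_\omega$ has both marginals equal to $\lim_{t\to\infty}\varphi_t(\theta_{-t}\omega)\rho=\delta_{a(\omega)}$, forcing $\nu_\omega=\delta_{(a(\omega),a(\omega))}$ and hence $\nu=\E\,\delta_{(a,a)}$ --- but this argument only yields convergence of $G_t(x)$ to $0$ in time average, not the convergence in probability as $t\to\infty$ that weak synchronization requires; closing that last gap needs an additional monotonicity/martingale-type argument, which is essentially what the proof of \cite[Proposition 2.20]{FGS14} supplies. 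Note also that you cannot transport the averaged statement to other couplings by absolute continuity: couplings such as the diagonal, or image couplings $\int_E\delta_{(y,g(y))}\,\dd\rho(y)$, are typically singular with respect to $\rho\otimes\rho$, so $(\rho\otimes\rho)$-a.e.\ synchronization says nothing about them directly --- your own worry about retaining the correlation of the two coordinates is thus well founded, and it is exactly the point your sketch does not resolve.
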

\begin{proof}
  By Lemma \ref{lem:very_weak_sync}, the statistical equilibrium $\mu_\omega$ is supported by a single random point. Since $\varphi$ is strongly mixing, by \cite[Proposition 2.20]{FGS14} this implies weak synchronization.
\end{proof}

\begin{lemma}\label{lem:strong_mix}
  Let $(\varphi,\theta)$ be a white noise $E$-valued RDS with invariant probability measure $\rho$. Assume that the distance of each pair of points converges to $0$ in probability as 
  $t \to \infty$. Then $\varphi$ is strongly mixing.
\end{lemma}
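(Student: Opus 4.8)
The plan is to verify strong mixing straight from its definition: I need to show that the law of $\varphi_t(\cdot,x)$ converges weakly to $\rho$ for every $x\in E$, equivalently that $P_t f(x)\to\int_E f\,\dd\rho$ for every $x$. Because $(E,d)$ is a metric space, the bounded Lipschitz functions are convergence-determining for weak convergence of probability measures, so it suffices to prove this convergence for an arbitrary bounded $f:E\to\R$ with Lipschitz constant $L$ and bound $M:=\sup_E|f|$.

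The central idea is to exploit invariance of $\rho$ in order to turn the target $\int_E f\,\dd\rho$ into an average of the semigroup. Since $\int_E P_t f\,\dd\rho=\int_E f\,\dd\rho$, I can write
$$ P_t f(x)-\int_E f\,\dd\rho = \int_E\big(P_t f(x)-P_t f(y)\big)\,\dd\rho(y) = \int_E \E\big[f(\varphi_t(\cdot,x))-f(\varphi_t(\cdot,y))\big]\,\dd\rho(y). $$
The point is that $\varphi_t(\cdot,x)$ and $\varphi_t(\cdot,y)$ are driven by the \emph{same} noise $\omega$, so the inner expectation is exactly the quantity controlled by the hypothesis on the pairwise distance.

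To estimate it, set $r_t(x,y):=d(\varphi_t(\cdot,x),\varphi_t(\cdot,y))$ and use $|f(a)-f(b)|\le\min(L\,d(a,b),2M)$ to obtain
$$ \big|P_t f(x)-P_t f(y)\big| \le \E\big[\min(L\,r_t(x,y),2M)\big]. $$
By assumption $r_t(x,y)\to 0$ in probability, hence $\min(L\,r_t(x,y),2M)\to 0$ in probability as well; since this random variable is bounded by $2M$, bounded convergence shows the right-hand side tends to $0$ for each fixed pair $(x,y)$. Finally, the $\rho$-integrand $y\mapsto P_t f(x)-P_t f(y)$ is bounded in modulus by $2M$ uniformly in $t$, so dominated convergence with respect to the probability measure $\rho$ lets me pass the limit inside and conclude $P_t f(x)-\int_E f\,\dd\rho\to 0$.

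Once the invariance identity is written down the remaining estimates are routine, so I do not expect a genuine obstacle. The only points needing care are the interchange of $t\to\infty$ with the $\rho$-integration in the last step---justified by the uniform bound $2M$ and the pointwise-in-$y$ convergence established just before---and the measurability of $y\mapsto P_t f(y)$, which follows from measurability of the cocycle and continuity of $f$. If anything is subtle at all, it is the conceptual reduction of strong mixing to the pairwise-contraction hypothesis via invariance of $\rho$; after that the argument is purely analytic.
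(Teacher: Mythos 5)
Your proof is correct and follows essentially the same route as the paper: both use invariance of $\rho$ to write $P_tf(x)-\rho(f)=\int_E\bigl(P_tf(x)-P_tf(y)\bigr)\,\dd\rho(y)$, exploit the same-noise coupling of $\varphi_t(\cdot,x)$ and $\varphi_t(\cdot,y)$, and pass to the limit by two applications of bounded/dominated convergence. Your one deviation --- reducing to bounded Lipschitz test functions via the bound $|f(a)-f(b)|\le\min(L\,d(a,b),2M)$ --- is a small but genuine improvement in rigor, since the paper's direct argument for general $f\in C_b(E)$ tacitly needs uniform continuity (or an extra tightness step) to get $|f(\varphi_t(\cdot,x))-f(\varphi_t(\cdot,y))|\to0$ in probability from $d(\varphi_t(\cdot,x),\varphi_t(\cdot,y))\to0$ in probability.
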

\begin{proof} 
  Denote the law of a random variable by $\LL(.)$. For $f\in C_b(E)$ and $x \in E$ we have
  \begin{align*}
     |\LL(\varphi_t(\cdot,x))(f)-\rho(f)| 
     &=|\LL(\varphi_t(\cdot,x))(f)-P_t^*\rho(f)| \\
     &=|\E f(\varphi_t(\cdot,x)) - \int_E \E f(\varphi_t(\cdot,y))\dd\rho(y)| \\
     &\le \int_E \E| f(\varphi_t(\cdot,x)) - f(\varphi_t(\cdot,y))|\dd\rho(y).
  \end{align*}
  Since $d(\varphi_t(\cdot,x),\varphi_t(\cdot,y))\to 0$ in probability for all $x,y \in E$, by dominated convergence, we have $\E| f(\varphi_t(\cdot,x)) - f(\varphi_t(\cdot,y))|\to 0$. Again by dominated convergence we conclude
    \begin{align*}
       |\LL(\varphi_t(\cdot,x))(f)-\rho(f)| 
       \to 0,
    \end{align*}
  for $t\to \infty$.
\end{proof}

\begin{cor}\label{cor:main} 
Let $(\varphi,\theta)$ be a white noise $E$-valued RDS with invariant probability measure $\rho$, such that for all $x,y\in E$ 
    $$d(\varphi_{t}(\cdot,x),\varphi_{t}(\cdot,y)) \to 0 \text{ for }t\to\infty,$$
 in probability. Then weak synchronization holds for $\varphi$.
\end{cor}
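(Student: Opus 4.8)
The plan is to deduce the statement directly from Theorem \ref{main}, whose conclusion is precisely weak synchronization. To apply it I need to verify its two hypotheses: that $\varphi$ is strongly mixing with respect to $\rho$, and that $\varphi$ satisfies weak global pointwise stability in the sense of Definition \ref{def:ptw:stab}(2). Both will follow from the single assumption that $d(\varphi_t(\cdot,x),\varphi_t(\cdot,y)) \to 0$ in probability for all $x,y \in E$.

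First I would observe that strong mixing is immediate from Lemma \ref{lem:strong_mix}, since its hypothesis is exactly the assumed pairwise convergence to $0$ in probability. Thus nothing further is needed for the mixing part.

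Next I would check weak global pointwise stability by taking $U = E$, so that $\rho(U) = 1$ holds automatically because $\rho$ is a probability measure, together with an arbitrary sequence $t_n \uparrow \infty$. Fix $x,y \in E$ and $\eta > 0$. Convergence in probability means $\PP\big(d(\varphi_t(\cdot,x),\varphi_t(\cdot,y)) \le \eta\big) \to 1$ as $t \to \infty$, so in particular
\[
  \liminf_{n\to\infty}\PP\big(d(\varphi_{t_n}(\cdot,x),\varphi_{t_n}(\cdot,y)) \le \eta\big) = 1.
\]
Hence \eqref{eq:ptw_stab} holds with the choice $\delta(x,y) = 1 > 0$, which is independent of $x,y$ and $\eta$, establishing weak global pointwise stability.

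With both hypotheses verified, Theorem \ref{main} applies and yields weak synchronization for $\varphi$, completing the argument. I do not anticipate any genuine obstacle: the corollary is essentially the conjunction of Lemma \ref{lem:strong_mix} and Theorem \ref{main}, and the only point worth noting is that convergence in probability is strictly stronger than the $\liminf \ge \delta(x,y) > 0$ demanded by pointwise stability, so the stability condition degenerates to the trivial choices $U = E$ and $\delta \equiv 1$.
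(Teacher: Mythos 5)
Your proof is correct and follows exactly the route the paper intends (the corollary is stated there without a written proof, as an immediate consequence of Lemma \ref{lem:strong_mix} and Theorem \ref{main}): strong mixing comes from the lemma, and weak global pointwise stability holds trivially with $U=E$ and $\delta\equiv 1$ since convergence in probability makes the $\liminf$ in \eqref{eq:ptw_stab} equal to $1$.
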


\begin{rem} The following converse statement holds true.
   Assume that $\varphi$ satisfies weak synchronization. Then $\varphi$ is strongly mixing and for all $x,y\in E$ 
       $$d(\varphi_{t}(\cdot,x),\varphi_{t}(\cdot,y)) \to 0 \text{ for }t\to\infty,$$
    in probability. In particular, $\varphi$ satisfies weak global pointwise stability.
\end{rem}
\begin{proof}
  Let $A(\omega)=\{a(\omega)\}$ $\PP$-a.s.\ be the weak point attractor of $\varphi$. Then, $\rho = \E \d_{a(\omega)}$ is an invariant probability measure for $(P_t)$. 
  By Lemma \ref{lem:strong_mix}, $\varphi$ is strongly mixing. Moreover,
     \begin{align*}
        d(\varphi_{t}(\cdot,x),\varphi_{t}(\cdot,y)) 
        \le d(\varphi_{t}(\cdot,x),a(\t_t\omega)) + d(\varphi_{t}(\cdot,y),a(\t_t\omega)) \to 0\text{ for }t\to\infty,
     \end{align*}  
     in probability.
\end{proof}

\subsection{Case of distance being a diffusion}

In this section we consider the special case in which the distance process of any pair of trajectories is a diffusion. The set-up is the same as in the 
previous subsection. In addition, we assume that for each pair of initial points $x,y \in E$ the distance process 
$r_t:=d(\varphi_t(x),\varphi_t(y)),\,t \ge 0$ satisfies a scalar stochastic differential equation
\begin{equation}\label{rho}
 \dd r_t= b(r_t) \,\dd t + \sigma(r_t)\, \dd W_t,
\end{equation}
on $I:=(0,R)$ for some $R\in (0,\infty]$, where $W$ is standard Brownian motion. 
We assume that $b$ and $\sigma$ are continuous on $I$, $\sigma >0$ on $I$, and $\lim_{x \downarrow 0}\sigma(x)=0$. Recall the following definition of the {\em scale function} 
$s$ and the {\em speed measure} $m$ on $I$ (see e.g.~\cite{K02} or \cite{KS91}):
\begin{align*}
 s(x):=&\int_c^x \exp\Big\{ \int_c^y \frac{2b(z)}{\sigma^2(z)}\,\dd z\Big\}\,\dd y,\\
 m(\dd x):=&\frac 2 {s'(x)\sigma^2(x)}\,\dd x,
\end{align*}
for $x \in I$ and an arbitrary point $c \in I$. Assume that none of the boundary points of $I$ are accessible (which can be expressed in terms of $b$ and $\sigma$ 
via Feller's test for explosions, see \cite[p.342ff]{KS91}) and that the speed measure $m$ is finite away from 0, i.e.\ $m[\varepsilon,R)<\infty$ for some (and hence for all) 
$\varepsilon \in I$.

\begin{lemma}\label{mainlemma}
Under the assumptions above we have the following equivalences
\begin{itemize}
  \item[(i)] $r_t \to 0$ in probability 
  \item[(ii)]$m(I)$  is infinite. 
\end{itemize}
\end{lemma}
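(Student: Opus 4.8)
The plan is to read off the long-time behaviour of the scalar diffusion $r_t$ from its recurrence classification, which is governed by the scale function $s$ and the speed measure $m$. A structural remark organises everything: since $R$ is inaccessible while $m[\varepsilon,R)<\infty$, Feller's test forces $s(R-)=+\infty$; indeed, if $s(R-)$ were finite then $\int^R (s(R-)-s(x))\,m(dx)<\infty$, which would make $R$ accessible. Moreover $m[\varepsilon,R)<\infty$ means that (ii) is equivalent to $m(0,\varepsilon)=\infty$, i.e.\ all the infinite mass sits at $0$. Throughout I use that $s$ is an increasing homeomorphism of $I$ and that $s(r_t)$ is a local martingale, and I would prove the two implications separately.

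For the implication (i)$\Rightarrow$(ii) I argue by contraposition: suppose $m(I)<\infty$. Then $m$ is finite near $0$ as well, so inaccessibility of $0$ forces (again by Feller's test) $s(0+)=-\infty$; together with $s(R-)=+\infty$ this makes $r_t$ recurrent, and finiteness of $m$ makes it positive recurrent with stationary probability measure $\pi:=m/m(I)$. By ergodicity the law of $r_t$ converges to $\pi$, so for small $\eta>0$ we have $\PP(r_t>\eta)\to\pi(\eta,R)>0$ (the speed density is strictly positive, whence $\pi(\eta,R)>0$). Hence $r_t\not\to 0$ in probability.

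For (ii)$\Rightarrow$(i) I split on $s(0+)$. If $s(0+)>-\infty$, then $s(r_t)$ is a local martingale bounded below, hence a supermartingale, and converges a.s.\ to a finite limit; this limit can be neither an interior value (a non-degenerate diffusion does not converge to a regular point) nor $s(R-)=+\infty$, so $s(r_t)\to s(0+)$ and $r_t\to 0$ almost surely. The genuinely hard case is $s(0+)=-\infty$, where $r_t$ is null recurrent. Fixing $\eta\in I$ and writing $T_\eta:=\inf\{t:r_t=\eta\}$, I would compute the mean return time to $\eta$ on each side from the Green function of $r$ against $m$: up to the normalisation of $m$, $\E_x[T_\eta]=\int_\eta^R (s(x\wedge y)-s(\eta))\,m(dy)$ for $x>\eta$, which is finite because $m(\eta,R)<\infty$, whereas $\E_x[T_\eta]=\int_0^\eta (s(\eta)-s(x\vee y))\,m(dy)$ for $x<\eta$, which is infinite because $m(0,\eta)=\infty$. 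Thus excursions of $r$ above $\eta$ have finite mean length while excursions below $\eta$ have infinite mean length, and from this I would conclude $\PP(r_t>\eta)\to 0$ for every $\eta$, i.e.\ $r_t\to 0$ in probability.

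The main obstacle is exactly this last step in the null-recurrent regime. The asymmetry of mean excursion lengths immediately yields the time-average statement $\frac1t\int_0^t 1_{\{r_s>\eta\}}\,ds\to 0$ (ratio ergodic theorem), but upgrading it to the pointwise-in-$t$ convergence $\PP(r_t>\eta)\to 0$ is the delicate part. The clean route is renewal/excursion theory at the level $\eta$: writing $\{r_t>\eta\}$ as the event that $t$ falls in an ``up'' excursion interval, one expresses $\PP(r_t>\eta)$ as a renewal convolution of the integrable up-excursion tail against the cycle renewal measure whose mean inter-renewal time is infinite, and the infinite-mean key renewal theorem sends this to $0$. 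Equivalently, the inverse local time at $\eta$ is a subordinator that splits into an ``up'' part of finite mean and a ``down'' part of infinite mean, so the probability that a fixed time $t$ lies in an up-gap vanishes. The only point requiring real care is the rigorous bookkeeping of the infinitely many short excursions accumulating at $\eta$, handled through the local time at $\eta$ and the Itô excursion measure $n$, under which $\int\zeta\,dn$ over up- resp.\ down-excursions is a constant multiple of $m(\eta,R)$ resp.\ $m(0,\eta)$; everything else is the standard scale-and-speed calculus.
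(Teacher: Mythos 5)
Your proposal is correct in substance but takes a genuinely more self-contained route than the paper, which disposes of the lemma in three lines by citation: for (i)$\Rightarrow$(ii) the paper argues exactly as you do, noting that finite $m$ makes $r$ ergodic with invariant probability $m/m(I)$ (citing \cite[Theorem 23.15]{K02}), and for (ii)$\Rightarrow$(i) it simply invokes \cite[Proposition 2.1(iii)]{S02} --- so the entire content of your second and third paragraphs is outsourced to that reference. Your reconstruction follows the right lines: the opening observation that inaccessibility of $R$ together with $m[\varepsilon,R)<\infty$ forces $s(R-)=\infty$ is literally the paper's first sentence; the dichotomy on $s(0+)$ is the natural one, and in the case $s(0+)>-\infty$ your supermartingale argument even yields almost sure convergence $r_t\to 0$, which is more than the lemma asserts. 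The one place where your sketch stops short of a complete proof is the null-recurrent case, and you correctly identify it: the mean-excursion comparison only gives the time-average (Ces\`aro/Abel) statement, and the upgrade to $\PP(r_t>\eta)\to 0$ via the infinite-mean key renewal theorem requires two pieces of bookkeeping that you name but do not carry out --- a genuine renewal structure (the cleanest device is cycles between two levels $\eta<\eta'$ rather than excursions from the single level $\eta$, since cycle lengths are then honest i.i.d.\ random variables whose law is spread out because diffusion hitting times have densities, so Blackwell's theorem with infinite mean applies) and direct Riemann integrability of $u\mapsto \PP_\eta(r_u>\eta,\,u<C_1)$, whose total integral is the finite expected occupation of $(\eta,R)$ during one cycle. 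Alternatively, the whole null-recurrent case follows in one stroke from the classical fact that the transition density $p_t(x,y)$ of a recurrent regular diffusion with respect to its speed measure satisfies $p_t(x,y)\to 1/m(I)=0$, whence $\PP_x(r_t>\eta)=\int_{(\eta,R)}p_t(x,y)\,m(\dd y)\to 0$ by dominated convergence, $m(\eta,R)$ being finite. Your Green-function formulas for the one-sided mean hitting times are correct as stated (the formula for $x<\eta$ uses $s(0+)=-\infty$, which holds in the case where you apply it), as is the excursion-measure identity $\int\zeta\,\dd n=\mathrm{const}\cdot m$. In short: correct strategy with one flagged-but-unfinished technical step; what your version buys is a self-contained argument and the stronger a.s.\ conclusion when $s(0+)$ is finite, while the paper's version buys brevity at the cost of opacity.
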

\begin{proof}
First note that the fact that $m$ is finite away from 0 and the boundary $R$ is inaccessible imply $s(R)=\infty$. 

 (i) $\Rightarrow$ (ii): if $m$ is finite, then the diffusion $r$ is ergodic with invariant probability measure $m/m(I)$ by \cite[Theorem 23.15]{K02} contradicting (i).
 
 (ii) $\Rightarrow$ (i): this follows from \cite[Proposition 2.1(iii)]{S02}.
\end{proof}

\section{Examples}

Let $\varphi$ be a white noise RDS on a complete, $d$-dimensional smooth Riemannian manifold $E$ with respect to an ergodic metric dynamical system 
$(\Omega,\PP,\theta)$ and let $(P_t)$ be the associated Markovian semigroup. Further assume that $\varphi_t(\omega,\cdot)\in C^{1,\delta}_{loc}$ for some 
$\delta >0 $ and all $t\ge 0$ and that $P_1$ has an ergodic invariant measure $\rho$ such that
  $$\E\int_E \|D\varphi_1(\omega,x)\|\,\dd \rho(x)<\infty.$$
Following \cite{C86} for the case of $E$ being compact and \cite{FGS14} for the case $E=\R^d$ this implies the existence of constants 
$\l_m <\dots < \l_1,\,m\le d$, the so-called Lyapunov spectrum, such that 
   $$\lim_{n\to\infty} \frac{1}{n} \log\|D\varphi_n(\omega,x)v\|\in \{\l_i\}_{i=1}^m, $$
for $\PP\otimes\rho$-a.a.\ $(\omega,x)\in \Omega\times M$ and all $v\in T_x M \setminus \{0\}$. We define the top Lyapunov exponent by $\l_{top}=\l_1$.

\subsection{Isotropic Brownian flows on the sphere}

We study isotropic Brownian flows on $E=S^{d-1}:=\{x \in \R^d:|x|=1\}$ for $d \ge 3$, where $|\cdot|$ denotes the Euclidean norm. 
On $E$, we define the metric $d(x,y)=\arccos\langle x ,y \rangle$, where $\langle.,.\rangle$ denotes the standard 
inner product on $\R^d$. Note that the metric $d$ takes values in $[0,\pi]$. The reason for 
excluding the case $d=2$ (i.e.~$E=S^1$) is that in that case the boundary $\pi$ of the two-point distance is accessible and therefore requires 
a slightly different treatment. 
Given smooth vector fields $V_j,\, j\in\{1,2,...\}$ and $V$ on the sphere $S^{d-1}$ and $iid$ standard Brownian motions $B^j$, a stochastic flow on $S^{d-1}$ is defined by the solution of the Stratonovich equation
\begin{eqnarray}\label{flowdef}
\begin{split}
\dd\varphi_t(x)=&\partial W_{\varphi_t(x)}(t),\\
\varphi_0(x)=&x
\end{split}
\end{eqnarray}
where the vector field valued semi-martingale $W_x(t)$ is defined as
\begin{eqnarray}\label{Wdef}
W_x(t)=\sum_{j=1}^\infty V_j(x)B^j_t+V(x)t.
\end{eqnarray}
The flow $\varphi$ is completely determined by its characteristics $a:S^{d-1}\times S^{d-1}\to TS^{d-1}\times TS^{d-1}$ and $b:S^{d-1}\to TS^{d-1}$ which are given by
\begin{eqnarray*}\label{chardef}
\begin{split}
\left<a(x,y),\dd f_x\otimes \dd g_y\right>=&\lim_{t\searrow0}\frac{1}{t}E[(f(\varphi_t(x))-f(x))(g(\varphi_t(y))-g(y))]\\
\left<b(x),\dd f_x\right>=&\lim_{t\searrow0}\frac{1}{t}E[f(\varphi_t(x))-f(x)].
\end{split}
\end{eqnarray*}
In terms of the vector fields used in (\ref{Wdef}) one has
\begin{align*}
a(x,y)=&\sum_{j=1}^\infty V_j(x)\otimes V_j(y),\\
b(x)=&V+\sum_{j=1}^\infty \nabla_{V_j}V_j,
\end{align*}
where $\nabla$ denotes the Riemannian connection on $S^{d-1}$ compatible with the standard metric defined above on the sphere. The flow is called Brownian if the one point motion is a Brownian motion on $S^{d-1}$ and this corresponds to the requirement
$\left<b(x),\dd f_x\right>=\frac12 \Delta f(x)$
where $\Delta$ is the Laplace-Beltrami operator on $S^{d-1}.$ The definition of isotropy for a flow on a sphere relies on the fact that the sphere is a homogeneous space. In the present context, the space $S^{d-1}=\mathcal{O}(d-1)/\mathcal{O}(d-2)$ where $\mathcal{O}(d-1)$ is the orthogonal group. The group $\mathcal{O}(d-1)$ acts transitively on $S^{d-1}$ and the flow $\varphi$ is called isotropic if for all $g\in \mathcal{O}(d-1),$
\begin{eqnarray}
\{g^{-1}\varphi_t(gx):x\in S^{d-1},\,t\ge 0\}\stackrel{\mathcal{L}}{=}\{\varphi_t(x):x\in S^{d-1},\,t\ge 0\}.
\end{eqnarray}
In terms of the characteristics, the flow is isotropic if  and only if for every $g\in \mathcal{O}(d-1)$
\begin{eqnarray*}
\begin{split}
\left<a(gx,gy),\dd g_x(u)\otimes \dd g_y(v)\right>=&\left<a(x,y),u\otimes v\right>, \,x,y\in S^{d-1},\,(u,v)\in T_xS^{d-1}\otimes T_yS^{d-1}\\
\left<b(gx),dg_x(u)\right>=&\left<b(x),u\right> ,\,x\in S^{d-1},\,u\in T_xS^{d-1}.
\end{split}
\end{eqnarray*}
Here $\dd g_x$ means the differential of the mapping $g:S^{d-1}\to S^{d-1}$ at the point $x\in S^{d-1}.$ It was shown in \cite{LJW84} that the 
solution of (\ref{flowdef}) defines a flow of diffeomorphisms of $S^{d-1}$ into itself and satisfies the definition of an RDS.

\begin{thm}
   Let $\varphi$ be an isotropic Brownian flow on $S^{d-1}$ satisfying $\l_1 \le 0$. Then weak synchronization holds.
\end{thm}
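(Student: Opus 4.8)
The plan is to reduce everything to the one-dimensional diffusion setting of the previous subsection and then invoke Lemma~\ref{mainlemma} together with Corollary~\ref{cor:main}. First I would use isotropy to show that for every pair $x\neq y$ the two-point distance $r_t=d(\varphi_t(x),\varphi_t(y))=\arccos\langle\varphi_t(x),\varphi_t(y)\rangle$ is a one-dimensional diffusion on $I=(0,\pi)$ of the form \eqref{rho}. Since $\mathcal{O}(d-1)$ acts transitively on $S^{d-1}$ and the characteristics $a,b$ of the flow are $\mathcal{O}(d-1)$-invariant, the law of the pair $(\varphi_t(x),\varphi_t(y))$ depends only on $r_0=d(x,y)$; applying It\^o's formula to $\arccos\langle\cdot,\cdot\rangle$ and inserting the characteristics $a$ and $b$ then produces explicit coefficients $b(r)$ and $\sigma(r)$ on $(0,\pi)$, expressed through the longitudinal and transverse parts of $a$ and carrying the geometric (Bessel) term coming from $\Delta$ on $S^{d-1}$.

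Next I would verify the standing hypotheses of the subsection ``Case of distance being a diffusion''. Continuity of $b,\sigma$ and $\sigma>0$ on $(0,\pi)$ follow from nondegeneracy of the flow, while $\sigma(r)\to0$ as $r\downarrow0$ reflects that the relative velocity along the connecting geodesic vanishes at coincidence; concretely $\sigma^2(r)\sim\beta r^2$ near $0$ for some $\beta>0$. The top Lyapunov exponent is recovered as the exponential separation rate of infinitesimally close trajectories, i.e. the drift of $\log r_t$ at the diagonal, $\lambda_1=\lim_{r\downarrow0}\big(b(r)/r-\sigma^2(r)/(2r^2)\big)$, which also yields $b(r)\sim(\lambda_1+\beta/2)\,r$ near $0$. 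Feller's test, applied with these explicit coefficients, should give that both $0$ and $\pi$ are inaccessible.

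The decisive step is to show that $\lambda_1\le0$ forces the total speed measure $m(I)$ to be infinite. I would split $m(I)=m(0,\varepsilon]+m[\varepsilon,\pi)$. Feeding the leading behaviour at $0$ into the definitions of $s$ and $m$ gives $s'(r)\sim C\,r^{2\lambda_1/\beta+1}$ and $m(\dd r)\sim C'\,r^{-2\lambda_1/\beta-3}\dd r$, so the contribution of $0$ is infinite precisely when $\lambda_1\ge-\beta$; since the geometric part of the drift carries the positive factor $d-2\ge1$, one always has $\lambda_1>-\beta$, so $m(0,\varepsilon]=\infty$ unconditionally. The remaining, and hardest, point is the antipodal boundary $r=\pi$: there $\cot r$ produces a Bessel-type drift $\sim\mp(\pi-r)^{-1}$, and one must show that $\lambda_1\le0$ is exactly the regime in which the trajectories do not drift to the antipodal configuration, so that $\pi$ is inaccessible and $m[\varepsilon,\pi)<\infty$. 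Establishing these near-$\pi$ asymptotics and tying them to the sign of $\lambda_1$ is where I expect the main obstacle to lie.

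Granting these facts, $m(I)=\infty$, so Lemma~\ref{mainlemma} yields $r_t\to0$ in probability for every pair $x,y\in S^{d-1}$, and Corollary~\ref{cor:main} then gives weak synchronization. The borderline case $\lambda_1=0$ is covered automatically, because the criterion is the divergence of $m(I)$ rather than a strict contraction estimate, which is exactly the gain over the methods of \cite{FGS14}.
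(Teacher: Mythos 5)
Your overall architecture is the paper's: represent the two-point distance $r_t$ as a diffusion on $(0,\pi)$ with the coefficients of \cite{R99}, and conclude via Lemma \ref{mainlemma} and Corollary \ref{cor:main}. Your normalization is also consistent with the paper: writing $b(r)\approx a\,r$ and $\sigma^2(r)\approx \beta r^2$ near $0$, one indeed has $\lambda_1=a-\beta/2$ (this matches the paper's formulas from \cite[Theorem 3.1]{R99}, though the identity $\lambda_1=\lim_{r\downarrow 0}(b(r)/r-\sigma^2(r)/(2r^2))$ would itself need the cited results for justification). But the decisive computation at the boundary $0$ contains a sign error that inverts the entire logic. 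Since $2b(r)/\sigma^2(r)\approx (2\lambda_1/\beta+1)/r$, the scale density is $s'(r)=\exp\{-\int_c^r 2b(z)/\sigma^2(z)\,\dd z\}\approx C\,r^{-(2\lambda_1/\beta+1)}$, \emph{not} $C\,r^{2\lambda_1/\beta+1}$ as you write; hence $m(\dd r)=\frac{2}{s'(r)\sigma^2(r)}\,\dd r\approx C'\,r^{2\lambda_1/\beta-1}\,\dd r$, not $C'\,r^{-2\lambda_1/\beta-3}\,\dd r$. Consequently $m(0,\varepsilon]=\infty$ if and only if $2\lambda_1/\beta-1\le -1$, i.e.\ if and only if $\lambda_1\le 0$ (the paper's $\gamma_1=2\lambda_1/\beta+1\le 1$), and \emph{not} ``unconditionally'' as you claim. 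Your version would yield weak synchronization for every isotropic Brownian flow, including those with $\lambda_1>0$; this is false, since for $\lambda_1>0$ the speed measure is finite and $r$ is then ergodic with a nondegenerate stationary law (the (i)$\Rightarrow$(ii) direction of Lemma \ref{mainlemma}; compare also the remark for isotropic Ornstein--Uhlenbeck flows that $m$ is finite iff $\lambda_1>0$). The fact that your argument never actually uses the hypothesis $\lambda_1\le 0$ at the boundary $0$ should have been the warning sign.

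Correspondingly, you misplace the difficulty at the antipodal boundary. Near $\pi$ nothing depends on $\lambda_1$: one has $2b(x)/\sigma^2(x)=-(d-2)/(\pi-x)+o(1)$, whence $s'(x)\approx c\,(\pi-x)^{-(d-2)}$, so $s(\pi-)=\infty$ and the speed measure near $\pi$ behaves like $(\pi-x)^{d-2}\,\dd x$, which is finite for every $d\ge 3$ irrespective of the sign of $\lambda_1$ (this uniform-in-$\lambda_1$ behavior is exactly why the paper excludes $d=2$, where $\pi$ becomes accessible). Even when $\lambda_1\ge 0$, so that $\limsup_{t\to\infty}r_t=\pi$ almost surely, the point $\pi$ remains inaccessible in finite time. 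So the hypothesis $\lambda_1\le 0$ is consumed entirely at the boundary $0$, while the boundary $\pi$ is handled uniformly by the geometry; your proposal has these two roles exactly interchanged. With the sign in $s'$ corrected and the roles of the two boundaries swapped back, your plan does reduce to the paper's proof.
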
 
\begin{proof}
We recall from \cite[Theorem 4.1]{R99} that $r_t := d(\varphi_t(\cdot,x),\varphi_t(\cdot,x))$ is a diffusion and 
  $$\dd r_t = b(r_t)\dd t + \sigma(r_t) \dd W_t, $$
where
\begin{align*}
  \sigma^2(r) &= 2[\a(1)-\a(\cos r)\cos r+\b(\cos r)\sin^2 r]\\
  b(r) &= \frac{d-2}{\sin r} (\a(1)\cos r-\a(\cos r)),
\end{align*}
with  
\[\alpha(r)=\sum_{l=1}^\infty a_l \gamma_l(r)+\sum_{l=1}b_l\left(r\gamma_l(r)-\frac{1-r^2}{d-2}\gamma'_l(r)\right)\]
\[\beta(r)=\sum_{l=1}^\infty a_l \gamma'_l(r)+\sum_{l=1}b_l\left(-\gamma_l(r)-\frac{r}{d-2}\gamma'_l(r)\right)\]
and
 \[\gamma_l(r)=\frac{C^{d/2}_{l-1}(r)}{C^{d/2}_{l-1}(1)}\]
where $C^{d/2}_{l-1}$ are the Gegenbauer polynomials. The sequences $\{a_l\}$ and $\{b_l\}$ are summable with nonnegative terms.
Moreover, from the proof of \cite[Theorem 4.1]{R99} we recall that, since $d\ge 3$,
  $$\PP[\inf\{t> 0:r_t = \pi\}=\infty]=1.$$
In \cite[Theorem 3.1]{R99} the Lyapunov spectrum $(\l_1,\dots,\l_{d-1})$ is shown to be given by
  $$\l_n = \frac{d-2n-1}{2}\a'(1)-\frac{d-1}{2}\a(1)-n\b(1)\quad n=1,\dots,d-1.$$
In particular, $\l_1= \frac{d-3}{2}\a'(1)-\frac{d-1}{2}\a(1)-\b(1)$.

Moreover, if $\l_1 <0$ then $\lim_{t\to\infty}r_t = 0$ $\PP$-a.s. and if $\l_1 \ge 0$ then $\PP[\limsup_{t\to\infty}r_t = \pi] = \PP[\liminf_{t\to\infty}r_t = 0]=1$.

Let $s$ be the scale function and $m$ the speed measure  associated to the diffusion $r_t.$ From \cite[proof of Theorem 4.1]{R99} we recall, close to $0$,
\begin{align*}
  b(x) &= x \left(\frac{d-2}{2}\right)(\a'(1)-\a(1))+o(|x|^2)\\
  \s^2(x) &= x^2(\a'(1)+\a(1)+2\b(1))+o(|x|^3)\\
  s'(x)&=\exp\Big\{-\int_\varphi^x \frac{2b(\t)}{\s^2(\t)} \dd\t \Big\} \approx c_1 x^{-\g_1} 
\end{align*}
with $\g_1=\frac{(d-2)(\a'(1)-\a(1))}{\a'(1)+\a(1)+2\b(1)}$, where $c_1$ is a positive constant and $\approx$ means the ratio of the two sides tends to $1$ as $x$ tends to $0.$ On the other hand, close to $\pi$,
\begin{align*}
  \s^2(x) &= 2(\a(1)+\a(-1)) + (\a(-1)-\a(-1)+2\b(-1))(x-\pi)^2+o(|x-\pi|^2)\\
  \frac{2b(x)}{\s^2(x)} &= -\frac{d-2}{\pi-x}+o(1) \\
  s'(x)&=\exp\Big\{-\int_\varphi^x \frac{2b(\t)}{\s^2(\t)}\dd \t \Big\} \approx c_2 (\pi - x)^{-(d-2)}.
\end{align*}
We note that $\g_1 \ge 1$ iff $\l_1 \ge 0$ which is seen by noting $\lambda_1\ge0$ is the same as $\frac{d-3}{2}\a'(1)-\frac{d-1}{2}\a(1)\ge\b(1).$ Thus, if $\lambda_1\ge0,$
\begin{align*}
\g_1&=\frac{(d-2)(\a'(1)-\a(1))}{\a'(1)+\a(1)+2\b(1)}\\
&\ge\frac{(d-2)(\a'(1)-\a(1))}{\a'(1)+\a(1)+2(\frac{d-3}{2}\a'(1)-\frac{d-1}{2}\a(1))}\\
&=1
\end{align*}
and the other equivalence is proven in a similar fashion.
 Hence,  $\l_1 \ge 0$ iff $s(0+)=-\infty$. Furthermore, $s(\pi-) = \infty$ iff $d\ge 3$. 

We further note that
\begin{align*}
m((0,\pi))
   &=\int_{(0,\pi)} \frac{2}{\s^2(x) s'(x)}\dd x \\
   &=\int_{(0,\frac{\pi}{2})} \frac{2}{\s^2(x) s'(x)}\dd x + \int_{(\frac{\pi}{2},\pi)} \frac{2}{\s^2(x) s'(x)}\dd x.
\end{align*} 
For $\e>0$ small we have
\begin{align*}
   \int_{(0,\e)} \frac{1}{\s^2(x) s'(x)}\dd x &\approx c_3 \int_{(0,\e)} x^{\g_1-2} \dd x =\begin{cases}
      \infty & \text{if } \g_1 \le 1\\
      <\infty & \text{if } \g_1 > 1.
   \end{cases}
\end{align*} 
and 
\begin{align*}
   \int_{(\pi-\e,\pi)} \frac{1}{\s^2(x) s'(x)}\dd x&\approx c_4\int_{(\pi-\e,\pi)}(\pi - x)^{d-2}\dd x
    <\infty.
\end{align*} 
Hence, $m((0,\pi))=\infty$ iff $\g_1\le 1$ iff $\l_1 \le 0$ and the claim follows from Lemma \ref{mainlemma} and 
Corollary \ref{cor:main}. 
\end{proof}
\begin{rk}
 Note that {\em synchronization} can never occur for an IBF on $S^{d-1}$ since $\varphi_t(\omega,\cdot): S^{d-1}\to S^{d-1}$ is a homeomorphism 
 almost surely and $S^{d-1}$ is compact.
\end{rk}

\subsection{Isotropic Ornstein-Uhlenbeck flows}
Next, we consider isotropic Ornstein-Uhlenbeck flows (IOUF) on $\R^d$. Consider a stochastic flow generated by the following  
{\em Kunita-type SDE} on $\R^d$ 
\begin{equation}\label{IOUF}
\dd X_t=-cX_t\,\dd t + M(\dd t,X_t),
\end{equation}
where $c\ge 0$ and $M$ is a (normalized) isotropic Brownian field, i.e.
\begin{enumerate}
 \item $M:[0,\infty)\times \R^d \times \Omega \to \R^d$ is a centered Gaussian process which is continuous in the first two variables.
 \item There exists a {\em covariance tensor} $b:\R^d \to \R^{d \times d}$ such that 
   $${\mathrm{cov}}(M_i(t,x),M_j(s,y))=(s\wedge t) b_{i,j}(x-y),\ s,t \ge 0,\  x,y \in \R^d.$$
 \item $b$ is a $C^4$ function with bounded derivatives up to order 4, $b(0)=I_d$ is the identity matrix and $x \mapsto b(x)$ is not constant.
 \item $b$ is {\em isotropic}, i.e. $b(x)=O^T b(Ox)O$ for each orthogonal matrix $O$ and all $x \in \R^d$. 
\end{enumerate}
Note that these properties imply that $t \mapsto M(t,x)$ is a $d$-dimensional standard Brownian motion for each fixed $x \in \R^d$. For the definition of the solution 
of an equation like \eqref{IOUF} and the fact that the equation generates a {\em stochastic flow} $\varphi$, see \cite{Ku90}. For a proof of the fact that \eqref{IOUF} 
generates an RDS $\varphi$ (which has a random attractor when $c>0$), see \cite{DS11}.  We will use the term {\em isotropic Brownian flow} (IBF) for both $\phi$ and $\varphi$ 
if $c=0$ and {\em isotropic Ornstein-Uhlenbeck flow} if $c>0$.

IBFs have been studied by several authors, among them \cite{BH86} who also provide a formula for its Lyapunov spectrum. Before stating the formula for the top exponent 
we mention that the covariance tensor $b$ can be represented in the form
$$
b_{i,j}(x)=(B_L(|x|)-B_N(|x|))\frac{x_ix_j}{|x|^2} +\delta_{i,j}B_N(|x|),\;x \in \R^d\backslash \{0\},\;i,j=1,\cdots,d,
$$
where $B_L$ and $B_N$ (the {\em longitudinal} respectively {\em transversal} or {\em normal} covariance functions) are defined as 
$B_L(r):=b_{p,p}(r e_p)$ and $B_N(r):=b_{p,p}(r e_q)$, where $r \ge 0$, $p\neq q$ and $e_i$ is the $i$-th standard basis vector in 
$\R^d$. Abbreviating $\beta_L:=-B_L''(0)$ and $\beta_N:=-B_N''(0)$ the formula 
for the top exponent $\lambda_1$ reads
$$
\lambda_1= (d-1)\frac {\beta_N}2-\frac{\beta_L}2
$$
in case $c=0$ (note that due to translation invariance a Lyapunov spectrum can be defined in spite of the fact that 
the associated Markov process (which is $d$-dimensional Brownian motion) does not have an invariant probability measure). 
It is easy to see (and was proved in \cite{D06}) that for $c>0$ the Lyapunov spectrum just shifts by $c$. In particular,
$$
\lambda_1= (d-1)\frac {\beta_N}2-\frac{\beta_L}2 -c.
$$
The two-point distance $(r_t)$ is a diffusion on $(0,\infty)$ satisfying the equation
$$
\dd r_t= \big((d-1)\frac{1-B_N(r)}r -c r\big)\,\dd t + \sqrt{2\big( 1-B_L(r)\big)}\,\dd W_t
$$
(this is shown in \cite{BH86} for $c=0$ and in \cite{D06} for $c>0$). Note that an IBF and an IOUF is a white-noise RDS. While the Markov 
process associated to an IBF is Brownian motion which does not have an invariant probability measure the Markov process associated to an IOUF 
is a $d$-dimensional Ornstein-Uhlenbeck process which 
has an invariant probability measure $\rho$ (a centered Gaussian measure). 
According to \cite[Proposition 7.1.3]{D06}, the speed measure of $r$ is finite iff $\lambda_1>0$ (this fact is not difficult to check given the SDE for the 
two-point distance). We conclude

\begin{thm}
   Let $\varphi$ be an isotropic Ornstein-Uhlenbeck flow on $\R^d$ satisfying $\lambda_1 \le 0$. Then weak synchronization holds.
\end{thm}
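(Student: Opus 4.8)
The plan is to obtain weak synchronization from Corollary~\ref{cor:main} by showing that the two-point distance converges to $0$ in probability, and to deduce the latter from Lemma~\ref{mainlemma} applied to the distance diffusion. Here $E=\R^d$ carries the Euclidean metric, so for $x\neq y$ the distance process $r_t=|\varphi_t(x)-\varphi_t(y)|$ is, by the cited works, a diffusion on $I=(0,\infty)$ solving
$$
 \dd r_t= \Big((d-1)\frac{1-B_N(r_t)}{r_t} -c\,r_t\Big)\,\dd t + \sqrt{2\big(1-B_L(r_t)\big)}\,\dd W_t ,
$$
with dispersion $\sigma=\sqrt{2(1-B_L(\cdot))}$. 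First I would check that this diffusion fits the framework of the preceding subsection: since $b$ is $C^4$, the functions $B_N,B_L$ are smooth, so the coefficients are continuous on $I$ and $\sigma>0$ there (for a non-constant field $B_L<1$ away from $0$), while $B_L(0)=1$ forces $\sigma(0+)=0$, exactly as required by Lemma~\ref{mainlemma}.

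Next I would verify the two standing boundary hypotheses. Inaccessibility of $0$ follows from the fact that $\varphi_t(\omega,\cdot)$ is a homeomorphism of $\R^d$ for each $t$, so distinct points never collide and $r_t>0$ for all $t$ almost surely; inaccessibility of $\infty$ follows because the one-point motion is a non-explosive $d$-dimensional Ornstein-Uhlenbeck process, whence $r_t\le|\varphi_t(x)|+|\varphi_t(y)|<\infty$ for all $t$. Finiteness of the speed measure away from $0$ is precisely where $c>0$ enters: as $r\to\infty$ one has $(d-1)(1-B_N(r))/r\to 0$ while $\sigma^2$ stays bounded and bounded away from $0$, so $2b/\sigma^2$ is dominated by $-c\,r$ and $s'(r)$ grows at least like $\exp\{c'r^2\}$ for some $c'>0$; hence the speed density $2/(s'\sigma^2)$ decays super-exponentially and $m[\varepsilon,\infty)<\infty$ for every $\varepsilon\in I$.

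With the framework in place, the equivalence of Lemma~\ref{mainlemma} reduces everything to the total mass $m(I)$. By \cite[Proposition~7.1.3]{D06} the speed measure of $r$ is finite if and only if $\lambda_1>0$, so $m(I)=\infty$ precisely when $\lambda_1\le 0$. One can also see this directly from the boundary behaviour: near $0$ one has $b(r)\sim\big((d-1)\frac{\beta_N}{2}-c\big)r$ and $\sigma^2(r)\sim\beta_L r^2$, whence $s'(r)\approx c_1 r^{-\gamma}$ with $\gamma=1+2\lambda_1/\beta_L$ and the speed density behaves like $c_2\,r^{\gamma-2}$, which is non-integrable at $0$ iff $\gamma\le 1$ iff $\lambda_1\le 0$, the contribution near $\infty$ being finite by the previous step. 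Thus, under $\lambda_1\le 0$, Lemma~\ref{mainlemma} gives $r_t=d(\varphi_t(\cdot,x),\varphi_t(\cdot,y))\to 0$ in probability for all $x,y\in\R^d$.

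Finally, since an IOUF with $c>0$ is a white noise RDS whose associated Markov process (a $d$-dimensional Ornstein-Uhlenbeck process) admits the centered Gaussian invariant probability measure $\rho$, Corollary~\ref{cor:main} applies and yields weak synchronization. The only genuinely technical step is the verification of the standing boundary hypotheses of the distance-diffusion framework, above all the finiteness of the speed measure near $\infty$; this is exactly what the confining drift $c>0$ supplies, and once it is secured the conclusion is a direct concatenation of Lemma~\ref{mainlemma} and Corollary~\ref{cor:main}. I expect this boundary analysis—rather than any further conceptual input—to be the main obstacle.
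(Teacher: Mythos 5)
Your proposal is correct and follows essentially the same route as the paper, which likewise deduces the theorem from the two-point distance SDE, the characterization that the speed measure is finite iff $\lambda_1>0$ (citing \cite[Proposition 7.1.3]{D06}), and the concatenation of Lemma \ref{mainlemma} with Corollary \ref{cor:main}. Your explicit verifications --- inaccessibility of the boundaries, finiteness of $m$ near $\infty$ via the confining drift $c>0$, and the computation $s'(r)\approx c_1 r^{-\gamma}$ with $\gamma=1+2\lambda_1/\beta_L$ near $0$ --- are exactly the details the paper leaves implicit with the remark that the speed-measure fact ``is not difficult to check given the SDE,'' and your asymptotics are consistent with $\lambda_1=(d-1)\frac{\beta_N}{2}-\frac{\beta_L}{2}-c$.
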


\bibliographystyle{plain}
\bibliography{refs}

\begin{thebibliography}{10}

\bibitem{A98}
Ludwig Arnold.
\newblock {\em Random Dynamical Systems}.
\newblock Springer Monographs in Mathematics. Springer-Verlag, Berlin, 1998.

\bibitem{BH86}
Peter Baxendale and Theodore~E. Harris.
\newblock Isotropic stochastic flows.
\newblock {\em Ann. Probab.}, 14(4):1155--1179, 1986.

\bibitem{C86}
A.~Carverhill.
\newblock Survey: {L}yapunov exponents for stochastic flows on manifolds.
\newblock In {\em Lyapunov exponents ({B}remen, 1984)}, volume 1186 of {\em
  Lecture Notes in Math.}, pages 292--307. Springer, Berlin, 1986.

\bibitem{DS11}
G.~Dimitroff and M.~Scheutzow.
\newblock Attractors and expansion for {B}rownian flows.
\newblock {\em Electron. J. Probab.}, 16:no. 42, 1193--1213, 2011.

\bibitem{D06}
Georgi Dimitroff.
\newblock Some properties of isotropic {B}rownian and {O}rnstein-{U}hlenbeck
  flows.
\newblock {\em PhD thesis, TU Berlin,
  https://opus4.kobv.de/opus4-tuberlin/files/1237/dimitroff\_georgi.pdf}, 2006.

\bibitem{FGS14}
Franco Flandoli, Benjamin Gess, and Michael Scheutzow.
\newblock Synchronization by noise.
\newblock {\em arXiv:1411.1340}, pages 1--42, 2014.

\bibitem{K02}
Olav Kallenberg.
\newblock {\em Foundations of Modern Probability}.
\newblock Probability and its Applications (New York). Springer-Verlag, New
  York, second edition, 2002.

\bibitem{KS91}
Ioannis Karatzas and Steven~E. Shreve.
\newblock {\em Brownian Motion and Stochastic Calculus}, volume 113 of {\em
  Graduate Texts in Mathematics}.
\newblock Springer-Verlag, New York, second edition, 1991.

\bibitem{Ku90}
Hiroshi Kunita.
\newblock {\em Stochastic flows and stochastic differential equations},
  volume~24 of {\em Cambridge Studies in Advanced Mathematics}.
\newblock Cambridge University Press, Cambridge, 1990.

\bibitem{LJ87}
Yves Le~Jan.
\newblock \'{E}quilibre statistique pour les produits de diff\'eomorphismes
  al\'eatoires ind\'ependants.
\newblock {\em Ann. Inst. H. Poincar\'e Probab. Statist.}, 23(1):111--120,
  1987.

\bibitem{LJW84}
Yves Le~Jan and Shinzo Watanabe.
\newblock Stochastic flows of diffeomorphisms.
\newblock In {\em Stochastic analysis ({K}atata/{K}yoto, 1982)}, volume~32 of
  {\em North-Holland Math. Library}, pages 307--332. North-Holland, Amsterdam,
  1984.

\bibitem{R99}
Olivier Raimond.
\newblock Flots browniens isotropes sur la sph\`ere.
\newblock {\em Ann. Inst. H. Poincar\'e Probab. Statist.}, 35(3):313--354,
  1999.

\bibitem{S02}
Michael Scheutzow.
\newblock Comparison of various concepts of a random attractor: a case study.
\newblock {\em Arch. Math. (Basel)}, 78(3):233--240, 2002.

\end{thebibliography}

\end{document}